\newcommand{\strutstretchdef}{\newcommand{\strutstretch}{\vphantom{\raisebox{1pt}{$\big($}\raisebox{-1pt}{$\big($}}}}
\theoremstyle{plain}
\newtheorem{theorem}{Theorem}[section]
\newtheorem{lemma}[theorem]{Lemma}
\newtheorem{corollary}[theorem]{Corollary}
\theoremstyle{definition}
\newtheorem{definition}[theorem]{Definition}
\newtheorem{example}[theorem]{Example}
\newtheorem{question}[theorem]{Question}
\newtheorem*{assumption}{Assumption}
\theoremstyle{remark}
\newtheorem{remark}[theorem]{Remark}
\numberwithin{equation}{section}
\newlength{\struh}
\newlength{\textminustop}
\newcommand{\tr}{\textrm}
\newcommand{\ran}{\tr{ran }}
\newcommand*{\childm}[3]{{\mathsf{Chi}}^{\langle#1\rangle}_{\mathscr #2}(#3)}
\newcommand*{\lambdab}{\boldsymbol\lambda}
\newcommand*{\parent}[1]{\mathsf{par}(#1)}
\newcommand*{\rootb}{{\mathsf{root}}}
\newcommand{\ncom}{\newcommand}
\ncom{\bq}{\begin{equation}}
\ncom{\eq}{\end{equation}}
\ncom{\beqn}{\begin{eqnarray*}}
\ncom{\eeqn}{\end{eqnarray*}}
\ncom{\beq}{\begin{eqnarray}}
\ncom{\eeq}{\end{eqnarray}}
\ncom{\nno}{\nonumber}
\ncom{\rar}{\rightarrow}
\ncom{\Rar}{\Rightarrow}
\ncom{\noin}{\noindent}
\ncom{\bc}{\begin{centre}}
\ncom{\ec}{\end{centre}}
\ncom{\sz}{\scriptsize}
\ncom{\rf}{\ref}
\ncom{\sgm}{\sigma}
\ncom{\Sgm}{\Sigma}
\ncom{\dt}{\delta}
\ncom{\Dt}{Delta}
\ncom{\lmd}{\lambda}
\ncom{\Lmd}{\Lambda}
\ncom{\eps}{\epsilon}
\ncom{\pcc}{\stackrel{P}{>}}
\ncom{\dist}{{\rm\,dist}}
\ncom{\sspan}{{\rm\,span}}
\ncom{\im}{{\rm Im\,}}
\ncom{\sgn}{{\rm sgn\,}}
\ncom{\ba}{\begin{array}}
\ncom{\ea}{\end{array}}
\ncom{\eop}{\hfill{{\rule{2.5mm}{2.5mm}}}}
\ncom{\eoe}{\hfill{{\rule{1.5mm}{1.5mm}}}}
\ncom{\eof}{\hfill{{\rule{1.5mm}{1.5mm}}}}
\ncom{\hone}{\mbox{\hspace{1em}}}
\ncom{\htwo}{\mbox{\hspace{2em}}}
\ncom{\hthree}{\mbox{\hspace{3em}}}
\ncom{\hfour}{\mbox{\hspace{4em}}}
\ncom{\hsev}{\mbox{\hspace{7em}}}
\ncom{\vone}{\vskip 2ex}
\ncom{\vtwo}{\vskip 4ex}
\ncom{\vonee}{\vskip 1.5ex}
\ncom{\vthree}{\vskip 6ex}
\ncom{\vfour}{\vspace*{8ex}}
\ncom{\norm}{\|\;\;\|}
\ncom{\integ}[4]{\int_{#1}^{#2}\,{#3}\,d{#4}}
\ncom{\inp}[2]{\langle{#1},\,{#2} \rangle}
\ncom{\Inp}[2]{\Langle{#1},\,{#2} \Langle}
\ncom{\vspan}[1]{{{\rm\,span}\#1 \}}}
\ncom{\dm}[1]{\displaystyle {#1}}
\def \N{\mathbb{N}}
\begin{document}
\title[Analytic $m$-isometries]{Analytic $m$-isometries without the wandering subspace property}

\author[A. Anand, S. Chavan and S. Trivedi]{Akash Anand, Sameer Chavan and Shailesh Trivedi}

%\thanks{The second named author was partially supported by NSF Grant DMS-1302666.} \

\address{Department of Mathematics and Statistics\\
Indian Institute of Technology Kanpur, India}
\email{akasha@iitk.ac.in}
   \email{chavan@iitk.ac.in}
 \email{shailtr@iitk.ac.in}

%\dedicatory{Dedicated to the memory of Serguei Shimorin}

\thanks{The work of the third author is supported through the Inspire Faculty Fellowship DST/INSPIRE/04/2018/000338}

\keywords{wandering subspace property, Wold-type decomposition, weighted shift, one-circuit directed graphs}

\subjclass[2000]{Primary 47B37; Secondary 47A15, 05C20}

\begin{abstract}
The wandering subspace problem for an analytic norm-increasing $m$-isometry $T$ on a Hilbert space $\mathcal H$ asks whether every $T$-invariant subspace of $\mathcal H$ can be generated by a wandering subspace. An affirmative solution to this problem for $m=1$ is ascribed to Beurling-Lax-Halmos, while that for $m=2$ is due to Richter.
In this paper,
we capitalize on the idea of weighted shift on one-circuit directed graph to construct a family of analytic cyclic $3$-isometries, which do not admit the wandering subspace property and which are norm-increasing on the orthogonal complement of a one-dimensional space. Further, on this one dimensional space, their norms can be made arbitrarily close to $1$. We also show that if the wandering subspace property fails for an analytic norm-increasing $m$-isometry, then it fails miserably in the sense that the smallest $T$-invariant subspace generated by the wandering subspace is of infinite codimension.
\end{abstract}

\maketitle

\section{Introduction}

The structural theory of $z$-invariant subspaces  in the Hilbert
spaces of analytic functions led many mathematicians to develop the
theory of non-isometric Hilbert space operators. For instance, the
structural theory of $z$-invariant subspaces in Dirichlet space led
Richter  \cite{R} to study the  class of $2$-isometries. A similar
quest inspired Aleman  \cite{Al} to study the class of
Dirichlet-type operators. Further, the structural theory of
$z$-invariant subspaces in the Bergman space motivated Shimorin \cite{S}
to study the class of Bergman-type operators. The basic problem here
is to see whether or not the given analytic operator
admits the wandering subspace property.
In this terminology (attributed to Halmos), a celebrated result of
Beurling \cite{B} says that the unweighted shift operator
in the Hardy space of the unit disc admits the wandering subspace property. A counter-part of
Beurling's Theorem for the Bergman shift was a
long-standing open problem. Indeed, a similar result had been deemed virtually impossible by many analysts in view of
the huge lattice of its invariant subspaces (see \cite[Corollaries 3.3 and 3.4]{ABFP}). Finally in
\cite{ARS}, the trio Aleman-Richter-Sundberg settled this problem
affirmatively. 
 Later in the influential paper \cite{S},
Shimorin not only obtained an alternate proof of their theorem but at
the same time developed an axiomatic approach to the wandering
subspace problem (see, for instance, \cite[Section 6.3]{H-K-Z}).

All the Hilbert spaces occurring below are complex,
infinite-dimensional and separable.
Let $\mathcal H$ denote a Hilbert space and
let $\mathcal B(\mathcal H)$ denote the unital $C^*$-algebra of bounded linear operators on $\mathcal H$. The Hilbert space adjoint of $T \in \mathcal B(\mathcal H)$ is denoted by $T^*$. The kernel of $T$ is denoted by $\ker T,$ whereas the range of $T$ is denoted by $\mbox{ran}\, T$.
An operator $T \in \mathcal B(\mathcal H)$ is {\it left-invertible} if there exists $L \in \mathcal B(\mathcal H)$ (a {\it left-inverse}) such that $LT=I,$ where $I$ denotes the identity operator on $\mathcal H$.
If $T$ is left-invertible, then $T^*T$ is invertible. This fact provides a canonical choice of left-inverse $L:=T'^*$ for any left-invertible operator $T$, where $T':=T(T^*T)^{-1}.$ The operator $T'$ is referred to as the {\it Cauchy dual} of $T,$ a notion coined and studied by Shimorin \cite{S}.
For an operator $T \in \mathcal B(\mathcal H)$, 
the {\it hyper-range} of $T$ is given by $$T^{\infty}(\mathcal H) := \bigcap_{n=0}^{\infty}T^n\mathcal H.$$
If $T$ is left-invertible, then $T^{\infty}(\mathcal H)$ is a closed subspace of $\mathcal H.$
An operator $T \in \mathcal B(\mathcal H)$ is {\it analytic} if $T^{\infty}(\mathcal H) = \{0\}$.
Note that an analytic operator on a non-zero Hilbert space is never invertible.
We say that $T \in \mathcal B(\mathcal H)$ admits the {\it wandering subspace property} if $[\ker T^*]_T=\mathcal H,$ where
\beq
\label{WS}
[\ker T^*]_T := \bigvee \{T^nh : n \in \mathbb N, ~h \in \ker T^* \},
\eeq
where $\mathbb N$ denotes the set of non-negative integers.
Here $\ker T^*$ is the {\it wandering subspace} in the sense that $$\ker T^* \perp T^n (\ker T^*), \quad n \geqslant 1.$$
Following \cite{S}, we say that a left-invertible operator $T \in \mathcal B(\mathcal H)$ admits {\it Wold-type decomposition} if $$T = U \oplus A ~~\mbox{on} ~~{\mathcal H} = {\mathcal H}_u \oplus {\mathcal H}_a,$$ where $ {\mathcal H}_u$ is the hyper-range of $T,$
$U$ is unitary on  ${\mathcal H}_u$, $A$ is analytic on ${\mathcal H}_a,$ and $A$ admits the wandering subspace property. It turns out that for a left-invertible operator $T \in \mathcal B(\mathcal H)$, $T$ is analytic if and only if the Cauchy dual $T'$ of $T$ admits wandering subspace property (see \cite[Proposition 2.7]{S}). In particular, $T$ admits Wold-type decomposition if and only if $T'$ admits Wold-type decomposition.

 Given a positive integer $m$ and $T \in \mathcal B(\mathcal H)$, set
   \begin{align*}
B_m(T) := \sum_{k=0}^m (-1)^k {m \choose k}{T^*}^kT^k.
   \end{align*}
   We say that 
%$T$ is {\it contractive} (resp. {\it expansive}) if $B_1(T) \geqslant 0$ (resp. $B_1(T) \leqslant 0$). 
an operator $T$ is {\it norm-increasing} or {\it expansive} (resp. {\it isometry}) if $B_1(T) \leqslant 0$ (resp. $B_1(T)=0$).
For $m \geq 2$, an operator
$T \in \mathcal B(\mathcal H)$ is said to be an {\it $m$-isometry} (resp. {\it $m$-concave}) if $B_m(T) = 0$ (resp. $(-1)^mB_m(T) \leqslant 0$).
A $2$-concave operator is usually known as {\it concave} operator.
It turns out that the approximate point spectrum of any $m$-isometric operator is contained in the unit circle and hence its spectrum is contained in the closed unit disc (see \cite[Lemma 1.21]{Ag-St}). For the basic theory of $m$-isometries, the reader is referred to \cite{Ag-St}. 
\begin{remark} \label{ap-sp}
Let $T \in \mathcal B(\mathcal H)$ be an $m$-concave operator. It follows from the definition that the approximate point spectrum of $T$ is contained in the unit circle. Since the approximate point spectrum contains the boundary of the spectrum, the spectrum of $T$ is contained in the closed unit disc.   In particular, the Cauchy dual operator of $T$ is well-defined. 
\end{remark}
%while for the study of operators Cauchy dual to $m$-isometries, refer to \cite{AC} and \cite{ACJS}.

This paper is partly motivated by the following problem addressed by Shimorin in \cite[Pg 185]{S}:
\begin{question} \label{Q}
If $T \in \mathcal B(\mathcal H)$ is a norm-increasing $m$-isometry (or norm-increasing $m$-concave), then whether $T$ admits Wold-type decomposition?
\end{question}

%\noindent
The above question has affirmative answer in cases $m=1$ (isometries) \cite{B, L, H} and $m=2$ (concave operators) \cite{R} (cf. \cite[Corollary 2.1]{O}). In case 
$m > 2$, the best known result till date, due to Shimorin, is as follows (cf. \cite[Corollary 1.6]{Se}):

\begin{theorem}\cite[Theorem 3.8]{S} \label{Shimorin}
Assume that $T \in \mathcal B(\mathcal H)$ is norm-increasing and satisfies the inequality 
\beq \label{w-2-c}
 T^{*2}T^2 - 3 T^*T  + 3 I - T'^*T' - P_{\ker T^*} \leqslant 0,
 \eeq
 where $P_{\ker T^*}$ denotes the orthogonal projection of $\mathcal H$ onto $\ker T^*.$
Then $T$ is a $3$-concave operator, which admits Wold-type decomposition. 
\end{theorem}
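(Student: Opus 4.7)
My plan is to establish the two conclusions of the theorem in succession: first the $3$-concavity of $T$, then the Wold-type decomposition.

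For the $3$-concavity, i.e.\ $B_3(T)\geqslant 0$, my first move is to sandwich the given operator inequality by $T^*$ on the left and $T$ on the right. Since $T^*P_{\ker T^*}=0$ and $T'^*T'=(T^*T)^{-1}$, this yields
\[
T^{*3}T^3 - 3T^{*2}T^2 + 3T^*T \,\leqslant\, T^*(T^*T)^{-1}T,
\]
equivalently, $-B_3(T)\leqslant T^*(T^*T)^{-1}T - I$. The $3$-concavity therefore reduces to establishing $T^*(T^*T)^{-1}T\leqslant I$. This bound is the technical heart of Step~1; it does \emph{not} follow from norm-increasing alone (which only gives $(T^*T)^{-1}\leqslant I$, hence $T^*(T^*T)^{-1}T\leqslant T^*T$), and the argument must use the hypothesis \emph{at $x$ itself} (where the term $-\|P_{\ker T^*}x\|^2$ contributes decisively on $\ker T^*$) together with the norm-increasing assumption on $(\ker T^*)^\perp$, combined via an orthogonal decomposition with respect to $\ker T^*$.

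With $3$-concavity in hand, I decompose $\mathcal H = \mathcal H_u\oplus\mathcal H_a$, where $\mathcal H_u := T^\infty(\mathcal H)$. This is closed since $T$ is left-invertible, and using the left-inverse $T'^*$ (which satisfies $T'^*T=I$) one checks $T\mathcal H_u = \mathcal H_u$. On $\mathcal H_u$, the $3$-concavity forces $n\mapsto \|T^n x\|^2$ to be a polynomial of degree at most $2$ in $n$. At the same time, for $x\in\mathcal H_u$ the preimages $y_n := T'^{*n}x$ satisfy $T^n y_n = x$ and $\|y_n\|\leqslant \|x\|$ (since $\|T'^*\|\leqslant 1$, which comes from norm-increasing). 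Together with $\|T^n y_n\|\geqslant\|y_n\|$, these constraints force $\|T^n x\|$ to be constant in $n$; hence $T|_{\mathcal H_u}$ is isometric and, being surjective onto $\mathcal H_u$, is unitary. On $\mathcal H_a$, $T$ is analytic by construction of the hyper-range, and the wandering subspace property is obtained via \cite[Proposition~2.7]{S} applied to its Cauchy dual.

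The main obstacle is the bound $T^*(T^*T)^{-1}T\leqslant I$ in Step~1. It cannot be deduced from norm-increasing by itself, and its proof must exploit the $-P_{\ker T^*}$ contribution in the original inequality essentially, via a decomposition of $\mathcal H$ into $\ker T^*$ and $(\ker T^*)^\perp$.
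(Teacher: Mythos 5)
The paper itself contains no proof of this statement---it is quoted from Shimorin \cite[Theorem 3.8]{S}---so the only question is whether your argument stands on its own. It does not: both steps have essential gaps. In Step~1 the sandwiching is correct and does reduce $3$-concavity to the bound $T^*(T^*T)^{-1}T\leqslant I$, but you never prove that bound; you only announce it as ``the technical heart.'' In fact it is \emph{not} a consequence of the hypotheses. For a unilateral weighted shift $Te_n=w_ne_{n+1}$ one has $T^*(T^*T)^{-1}Te_n=(w_n/w_{n+1})^2e_n$, so the bound says the weights are non-decreasing; yet the norm-increasing weights $w_0^2=1$, $w_1^2=3/2$, $w_2^2=6/5$, $w_3^2=11/10$, $w_n^2=1$ for $n\geqslant 4$ satisfy \eqref{w-2-c} at every basis vector (there it reads $w_n^2w_{n+1}^2-3w_n^2+3-w_n^{-2}-\delta_{n,0}\leqslant 0$, which one checks directly) while $w_1>w_2$. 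So the route through $T^*(T^*T)^{-1}T\leqslant I$ cannot be completed; indeed in this example $\langle B_3(T)e_1,e_1\rangle=1-4.5+5.4-1.98<0$, so the $3$-concavity assertion in the sense of the paper's sign convention is itself delicate, and a correct proof must engage with Shimorin's original formulation rather than the naive sandwich.

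The more serious gap is in Step~2. The unitarity argument on $\mathcal H_u$ is incomplete (``these constraints force $\|T^nx\|$ to be constant'' does not follow from the two inequalities you list, and ``$3$-concavity forces $\|T^nx\|^2$ to be a polynomial of degree at most $2$'' is the characterization of $3$-\emph{isometries}, not $3$-concave operators), but the fatal point is your last sentence: \cite[Proposition 2.7]{S} yields the wandering subspace property for $T|_{\mathcal H_a}$ only after you prove that the \emph{Cauchy dual} of $T|_{\mathcal H_a}$ is analytic. That implication is the entire content of the theorem---it is exactly where \eqref{w-2-c} must be used (Shimorin verifies the condition $\|T'x+y\|^2\leqslant 2(\|x\|^2+\|T'y\|^2)$ for the Cauchy dual and then invokes his Theorem 3.6)---and your Step~2 never invokes \eqref{w-2-c} at all. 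Without it the claim is false: the main construction of this very paper (Example~\ref{construction}) is an analytic $3$-isometry whose Cauchy dual is not analytic and which therefore fails the wandering subspace property. As written, Step~2 is circular at precisely the point the theorem is about.
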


Unlike the condition of $3$-concavity, the condition \eqref{w-2-c} is not stable with respect to the process of taking the restriction to an invariant subspace. This is one of the obstructions in deriving the wandering subspace property for norm-increasing operators satisfying \eqref{w-2-c}.
Further, as pointed out in \cite{S}, the main difficulty in Question \ref{Q} is whether the analyticity implies the wandering subspace property. To see this deduction, note that $\mathcal H_u:=T^{\infty}(\mathcal H)$ is $T$-invariant and $T|_{\mathcal H_u}$ is invertible. Since the restriction of a norm-increasing $m$-isometry
to an invariant subspace is again a norm-increasing $m$-isometry, by \cite[Proposition 3.4]{S},  $T|_{\mathcal H_u}$ is unitary and $\mathcal H_u$ is reducing for $T.$ 
%The same remark holds for any $(2m)$-concave expansion. 
The assumption that $T$ is norm-increasing in Question \ref{Q} is essential since there exist invertible (and hence non-analytic) cyclic $3$-isometries on an infinite-dimensional Hilbert space, which are not unitary (refer to \cite[Section 4]{Ag-St}).
However, we are not aware of any known example of an analytic $m$-isometry, which does not admit the wandering subspace property. 
One of the purposes of this paper is to settle the wandering subspace problem for analytic $m$-isometries in the negative. 
%In particular, our solution answers Question \ref{Q} to a large extent by
This is achieved by constructing a family of analytic cyclic $3$-isometries, which do not admit the wandering subspace property (the reader is referred to \cite[Corollary 1]{KLS}, \cite[Pg 186]{S}, \cite[Proposition 2]{HZ}, \cite[Corollary 6.2]{CDS}, \cite[Theorem 2.2]{Ca}, \cite[Corollary 4.1]{NRW}, \cite[Theorem 1.5]{Se} for a variety of results pertaining to the phenomenon of failure of wandering subspace property in Hardy, Bergman and Dirichlet-type spaces). Further, these $3$-isometries are norm-increasing except on a one-dimensional space, where their norms can be made arbitrarily close to $1$. Needless to say,
these examples shed light on the role of the expansivity property in Question \ref{Q}.
Our construction capitalizes on the idea of weighted shift operator on directed graph recently boosted in the realm of graph-theoretic operator theory (refer to \cite{JJS}, \cite{CT}, \cite{BJJS}, \cite{DPP}). 

Here is the outline of the paper. In Section 2, we present some preparatory results including a characterization of a class of analytic weighted shifts on a locally finite one-circuit directed graph (see Theorem \ref{main-1}). Section 3 includes the construction of analytic $3$-isometries without the wandering subspace property  (see Example \ref{construction}). In Section 4, we prove that for analytic norm-increasing $m$-concave operators, either the wandering subspace property holds or it fails miserably.
This result applies to the class of analytic weighted shifts on a locally finite one-circuit directed graph as discussed in Section 2.
In the remaining part, we collect preliminaries related to weighted shift on directed graphs mostly from \cite{JJS} and \cite{BJJS}.

\subsection{Weighted shifts on one-circuit directed graphs}

The notion of adjacency operator on a directed graph first appeared in \cite{FSW}, while that of weighted shift on a (one-circuit) directed graph, central to the present investigations, has been formulated and investigated in \cite{BJJS} (refer to \cite[Section 3]{BJJS} for its connection with weighted composition operators 
on discrete measure spaces). The reader is referred to \cite[Chapter 2]{JJS} for an excellent exposition on directed graphs and all relevant notions including weighted shift on directed trees (see, in particular, the discussion on the parent $\parent{\cdot}$ as a partial function). It is worth mentioning that $\parent{\cdot}$ becomes a function in case the underlying directed graph is a one-circuit directed graph.
\begin{definition}
Let $\mathscr T = (V, E)$ be a rooted directed tree with root $\rootb$. For $l \in \mathbb N$, let $C_l:=\emptyset$ if $l=0$ and $C_l:=\{w_1, \ldots, w_l\}$ otherwise. A {\it one-circuit directed graph associated with the rooted directed tree} $\mathscr T = (V, E)$ is a directed graph $\mathscr G = (W, F)$ given by 
\beqn
W &:= &V \sqcup C_l, \\
F &:=& \begin{cases} E \cup \{(\rootb, \rootb)\} & \mbox{if~}l=0, \\
E \cup \{(\rootb, w_1), (w_1, w_2), \ldots, (w_{l-1}, w_l), (w_l, \rootb)\} & \mbox{otherwise}.
\end{cases}
\eeqn
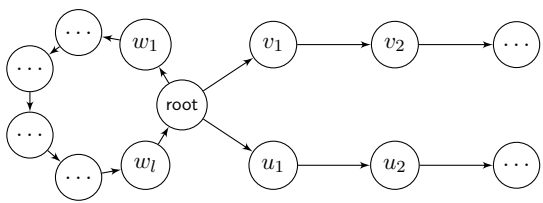
\begin{figure}[H]
\begin{tikzpicture}[scale=.8, transform shape]
\tikzset{vertex/.style = {shape=circle,draw, minimum size=1em}}
\tikzset{edge/.style = {->,> = latex'}}
% vertices
\node[vertex, scale=.8] (a) at  (0.5,0) {$\rootb$};
\node[vertex] (b) at  (2,-1) {$u_1$};
\node[vertex] (c) at  (2,1) {$v_1$};
\node[vertex] (d) at  (4, -1) {$u_2$};
\node[vertex] (e) at  (4, 1) {$v_2$};
\node[vertex] (f) at  (6, -1) {$\ldots$};
\node[vertex] (g) at  (6, 1) {$\ldots$};
%\node[vertex] (h) at  (8, -1) {$\ldots$};
\node[vertex] (j) at  (-2, -.5) {$\ldots$};
\node[vertex] (k) at  (-2, .6) {$\ldots$};
\node[vertex] (l) at  (-1.2, 1.2) {$\ldots$};
\node[vertex] (m) at  (-1.2, -1.2) {$\ldots$};
\node[vertex] (n) at  (-.1, 1) {$w_1$};
\node[vertex] (o) at  (-.1, -1) {$w_l$};

%\tikzset{vertex/.style = {shape=circle,draw, blue, minimum size=1em}}
%\node[vertex] (i) at  (8, 1) {$\ldots$};

%edges
\draw[edge] (a) to (c);
\draw[edge] (a) to (b);
\draw[edge] (c) to (e);
\draw[edge] (b) to (d);
\draw[edge] (e) to (g);
\draw[edge] (d) to (f);
%\draw[edge] (f) to (h);
%\draw[edge] (g) to (i);

\draw[edge] (a) to (n);
\draw[edge] (n) to (l);
\draw[edge] (l) to (k);
\draw[edge] (k) to (j);
\draw[edge] (j) to (m);
\draw[edge] (m) to (o);
\draw[edge] (o) to (a);

%\def\Radius{1.3}
%\draw (a) arc(0:360:\Radius) -- cycle;

%\draw[edge] (h) to (j);
%\draw[edge] (i) to (k);
%\draw[orange,rotate=45,shift={(3 cm,5 cm)}] \draw[edge] (e) to (g);
\end{tikzpicture}
\caption{A one-circuit directed graph with one branching vertex} \label{fig3}
\end{figure}
\noindent
We refer to $l$ as the {\it length of the circuit}.
For a subset $\{\lambda_w : w \in W\}$ of $\mathbb C$, consider the {\it weight system} $\lambdab : W \rar \mathbb C$ given by $\lambdab(w):=\lambda_w,$ $w \in W.$
The {\em weighted shift operator} $S_{\lambdab, \mathscr G}$ on ${\mathscr G}$
 is defined by
   \begin{align*}
   \begin{aligned}
{\mathscr D}(S_{\lambdab, \mathscr G}) & := \{f \in \ell^2(W) \colon
\varLambda_{\mathscr G} f \in \ell^2(W)\},
   \\
S_{\lambdab, \mathscr G} f & := \varLambda_{\mathscr G} f, \quad f \in {\mathscr
D}(S_{\lambdab, \mathscr G}),
   \end{aligned}
   \end{align*}
where $\varLambda_{\mathscr G}$ is the mapping defined on
complex functions $f$ on $V$ by
   \begin{align*}
(\varLambda_{\mathscr G} f) (w) :=\lambda_w \cdot f\big(\parent w\big), \quad w \in W.
   \end{align*}
\end{definition}
\begin{remark}
\label{rmk1}
Assume that $S_{\lambdab, \mathscr G}$ belongs to $\mathcal B(\ell^2(W)).$ It is well-known that the above notion is closely related to the notions of weighted shift on rooted directed trees and composition operator on discrete measure spaces  (refer to \cite{FSW}, \cite{JJS}, \cite[Theorem 3.2.1]{BJJS}). This is summarized as follows:
\begin{enumerate}
\item[(i)] Consider the weighted shift operator $S_{\lambdab, \mathscr T}$ with weight system $\lambdab|_{V \setminus \{\rootb\}}$ on the rooted directed tree $\mathscr T$. Note that $\ell^2(V \setminus \{\rootb\})$ is an invariant subspace for $S_{\lambdab, \mathscr G}$ and
%{\red \beqn
%S_{\lambdab, \mathscr G}|_{\ell^2(V)} =\begin{cases} S_{\lambdab, \mathscr T} + \lambda_{\rootb}e_{\rootb} \otimes e_{\rootb} & \mbox{if~}l=0, \\
%S_{\lambdab, \mathscr T} + \lambda_{w_1}e_{w_1} \otimes e_{\rootb} & \mbox{otherwise}.
%\end{cases}
%\eeqn
%}
\beqn
S_{\lambdab, \mathscr G}|_{\ell^2(V \setminus \{\rootb\})} &=& S_{\lambdab, \mathscr T}|_{\ell^2(V \setminus \{\rootb\})}, \\ 
  S_{\lambdab, \mathscr G}e_{\rootb} &=& \begin{cases} S_{\lambdab, \mathscr T}e_{\rootb} + \lambda_{\rootb}e_{\rootb}   & \mbox{if~}l=0, \\
S_{\lambdab, \mathscr T}e_{\rootb} + \lambda_{w_1}e_{w_1}   & \mbox{otherwise}.
\end{cases}
\eeqn
\item[(ii)] Consider the map $\phi : W \rar W$ given by $$\phi(w)=\parent{w}, \quad w \in W.$$ Then $S_{\lambdab, \mathscr G}$ can be identified with the weighted composition operator $C_{\phi, \lambdab}$ given by
\beqn
(C_{\phi, \lambdab}f)(w)=\lambda_w f(\phi(w)), \quad f \in \ell^2(W), ~w \in W.
\eeqn
\end{enumerate}
\end{remark}

Let $\mathscr G = (W, F)$ be a directed graph and let
$U$ be a subset of $W$. Set
$$\mathsf{Chi}_{\mathscr G}(U) := \bigcup_{u\in U} \{w\in W
\colon (u,w) \in F\}.$$ 
%Let $\mathbb N$ denote the set of non-negative integers.
We define inductively $\childm{n}{G}{U}$  for 
$n \in \mathbb N$ as follows: 
\beqn
\childm{n}{G}{U}:= 
\begin{cases} U & ~\mbox{if }~n=0, \\
\mathsf{Chi}_{\mathscr G}\big({\childm{n-1}{G}{U}}\big) & ~\mbox{if~} n \geqslant 
1. \end{cases}
\eeqn
Given $w \in W$, we set $\mathsf{Chi}_{\mathscr G}(w):=\mathsf{Chi}_{\mathscr G}(\{w\})$.

\begin{assumption} {\it Let $\mathscr G = (W, F)$ be a one-circuit directed graph associated with the rooted directed tree.
Unless stated otherwise, $\mathscr G$ is assumed to be countably infinite $($that is, $W$ is countably infinite$)$ and leafless $($that is, $\mbox{Chi}_{\mathscr G}(w) \neq \emptyset$ for every $w \in W).$ Further, we assume that $\lambdab$ consists of positive numbers and $S_{\lambdab, \mathscr G}$ belongs to $\mathcal B(\ell^2(W)).$}
\end{assumption}

In what follows, we need the following elementary fact in the sequel.
%(cf. \cite[Corollary 2.1.5]{JJS}).

\begin{lemma}
Let $l \in \mathbb N$ and let $\mathscr G = (W, F)$ be a one-circuit directed graph associated with the rooted directed tree $\mathscr T = (V, E)$ and the circuit $\{w_1, \ldots, w_{l+1}\},$ where $w_{l+1}:=\rootb.$
Then
 $W$ can be partitioned as follows:
\beq \label{partition}
W =  \left(\bigsqcup_{k=1}^{l+1} \childm{k}{G}{\rootb} \right) \bigsqcup \left(\bigsqcup_{k=1}^{l+1} \bigsqcup_{m=1}^{\infty} \childm{(l+1)m+k}{T}{\rootb} \right),
\eeq
where $\bigsqcup$ denotes the disjoint sum.
\end{lemma}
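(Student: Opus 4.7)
The plan is to follow the $\mathscr{G}$-children of $\rootb$ for the first $l+1$ levels (one full trip around the circuit), during which the circuit vertices appear one after another, and then to note that from level $l+2$ onward every $\mathscr{G}$-descendant of $\rootb$ coincides with a $\mathscr{T}$-descendant, so the remaining levels can be re-indexed by division with remainder. Concretely, adopting the convention $w_{l+1}=\rootb$, the first thing I would prove, by induction on $k\in\{1,\dots,l+1\}$, is the identity
\[
\childm{k}{G}{\rootb}\;=\;\childm{k}{T}{\rootb}\,\sqcup\,\{w_k\}.
\]
This is immediate from the explicit description of the edge set $F$: the graph $\mathscr G$ agrees with $\mathscr T$ on $V\setminus\{\rootb\}$, the root $\rootb$ acquires the extra $\mathscr G$-child $w_1$ (or itself when $l=0$), and each $w_i$ with $1\leqslant i\leqslant l$ has exactly one $\mathscr G$-child, namely $w_{i+1}$. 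The borderline case $l=0$ just records the self-loop at $\rootb$.

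With this formula in hand, the first block of the partition unfolds as
\[
\bigsqcup_{k=1}^{l+1}\childm{k}{G}{\rootb}\;=\;\Bigl(\bigsqcup_{k=1}^{l+1}\childm{k}{T}{\rootb}\Bigr)\,\sqcup\,\{w_1,\dots,w_l,\rootb\}.
\]
Disjointness of the $\mathscr T$-levels is the standard fact that every vertex of a rooted tree has a uniquely defined depth, while $w_1,\dots,w_l$ lie in $C_l\subseteq W\setminus V$ and $\rootb=w_{l+1}$ has depth $0$ in $\mathscr T$, so no overlap can occur. For the second block I would invoke division with remainder: every integer $n\geqslant l+2$ admits a unique representation $n=(l+1)m+k$ with $m\geqslant 1$ and $k\in\{1,\dots,l+1\}$, hence
\[
\bigsqcup_{k=1}^{l+1}\bigsqcup_{m=1}^{\infty}\childm{(l+1)m+k}{T}{\rootb}\;=\;\bigsqcup_{n=l+2}^{\infty}\childm{n}{T}{\rootb}.
\]

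Combining these two blocks with the elementary tree decomposition $V=\{\rootb\}\,\sqcup\,\bigsqcup_{n=1}^{\infty}\childm{n}{T}{\rootb}$ yields $\{\rootb\}\sqcup C_l \sqcup \bigsqcup_{n\geqslant 1}\childm{n}{T}{\rootb}=C_l\sqcup V=W$, which is precisely the asserted partition. There is no genuine obstacle in the argument; the only point that requires a touch of care is the mild case-split between $l=0$ (self-loop at $\rootb$) and $l\geqslant 1$ (proper circuit), and this is exactly what the convention $w_{l+1}=\rootb$ is designed to absorb.
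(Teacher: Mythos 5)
Your proof is correct and follows essentially the same route as the paper's: both hinge on the identity $\childm{k}{G}{\rootb}=\childm{k}{T}{\rootb}\sqcup\{w_k\}$ for $k=1,\dots,l+1$, the division algorithm to re-index the tree levels beyond $l+1$, and the standard level decomposition of a rooted directed tree. The only difference is presentational: you assemble the two blocks explicitly and verify disjointness in slightly more detail, whereas the paper argues the two inclusions directly.
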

\begin{proof}
Clearly, the right hand side of (2.1) is contained in $W$. To see the inclusion other way round, first note that $W = V \sqcup \{w_1, \ldots, w_l\}$ and \beq
\label{rel} \childm{k}{G}{\rootb} = \childm{k}{T}{\rootb} \sqcup \{w_k\}, \quad k = 1, \ldots, l+1. \eeq
Let $w \in W$. Then either $w \in V \setminus \{\rootb\}$ or $w = w_j$ for some $j = 1, \ldots, l+1$.  If $w = w_j$ for some $j = 1, \ldots, l+1$, then, as noted above, $w \in \childm{j}{G}{\rootb}$. Let $w \in V \setminus \{\rootb\}$. Then by \cite[Corollary 2.1.5]{JJS}, there exists a positive integer $n$ such that $w \in \childm{n}{T}{\rootb}$. 
By division algorithm, there exists $m \in \mathbb N$ such that $n = m(l+1)+k$ for some $k=0, \ldots, l$. It is now easy to see that $w$ belongs to the right hand side of \eqref{partition}.
%If $m=0$, then, as noted above, $w \in \childm{n}{G}{\rootb} =  \childm{n}{T}{\rootb} \sqcup \{w_n\}$. The case $m \Ge 1$ is obvious. 
%This completes the proof. 
%{\red Since $\mathbb N = \{0\} \cup \{(l+1)m + k  : k =1, \ldots, l+1, m \in \mathbb N\}$, the desired conclusion follows immediately.}
\end{proof}

\section{Analyticity of weighted shifts on one-circuit directed graphs}

The main result of this section characterizes a class of left-invertible analytic weighted shifts on a locally finite one-circuit directed graph (see Theorem \ref{main-1} and Remark \ref{left-i}). 
Unlike weighted shifts on rooted directed trees (see \cite[Lemma 3.3]{CT}), these shifts need not be analytic. Indeed, they may admit non-zero eigenvalues (cf. \cite[Theorem 2.1]{C}). 

\begin{theorem}
\label{main-1}
Let $l \in \mathbb N$ and let $\mathscr G = (W, F)$ be a one-circuit directed graph associated with the locally finite rooted directed tree $\mathscr T = (V, E)$ with root $\rootb$ and the circuit $\{w_1, \ldots, w_{l+1}\},$ where $w_{l+1}:=\rootb.$  
Let $S_{\lambdab, \mathscr G}$ be a weighted shift on $\mathscr G$ such that $\inf \lambdab > 0$ and let 
\beq \label{lambda-k}
\lambdab^{(k)} := \lambdab (\lambdab \circ \mathsf{par}) \cdots (\lambdab \circ \mathsf{par}^{k-1}), \quad k \geqslant 1.
\eeq 
Then $S_{\lambdab, \mathscr G}$ is analytic if and only if for each $k=1, \ldots, l+1,$ the following series diverges:
\beq \label{series}
&& \sum_{v \, \in \, \childm{k}{G}{\rootb}} \left(\frac{\lambdab^{(k)}(v)}{\lambdab^{(k)}(w_k)}\right)^2  \notag \\ & + &
\sum_{m=1}^{\infty}  \ \sum_{v \, \in \, \childm{(l+1)m + k}{T}{\rootb}} \left(\frac{\lambdab^{((l+1)m + k)}(v)}{\lambdab^{((l+1)m + k)}(w_k)}\right)^2.
\eeq
\end{theorem}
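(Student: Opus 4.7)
The plan is to compute the hyperrange $S_{\lambdab, \mathscr G}^\infty(\ell^2(W)) = \bigcap_{n \geqslant 0} S_{\lambdab, \mathscr G}^n \ell^2(W)$ explicitly, using the iterated action formula
\[
(S_{\lambdab, \mathscr G}^n g)(v) = \lambdab^{(n)}(v) \, g(\mathsf{par}^n(v)), \quad v \in W,\;g \in \ell^2(W),
\]
which follows by induction from the definition of $S_{\lambdab, \mathscr G}$. Hence $f \in S_{\lambdab, \mathscr G}^n \ell^2(W)$ iff some $g_n \in \ell^2(W)$ satisfies $f(v) = \lambdab^{(n)}(v) g_n(\mathsf{par}^n(v))$ for every $v$, which in particular imposes the consistency condition $f(v_1)/\lambdab^{(n)}(v_1) = f(v_2)/\lambdab^{(n)}(v_2)$ whenever $\mathsf{par}^n(v_1) = \mathsf{par}^n(v_2)$.

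The pivotal observation is that for every $u \in W$ and $n \geqslant 1$, the fibre $(\mathsf{par}^n)^{-1}(u)$ lies in a single block of the partition \eqref{partition}, since $\mathsf{par}$ shifts the residue class mod $l+1$ of the ``generation'' by $-1$. Fixing $f$ in the hyperrange and setting $c_n := g_n(\rootb)$, the action formula yields $f(v) = c_n \lambdab^{(n)}(v)$ for every $v \in \childm{n}{G}{\rootb}$. Since $\lambdab^{((l+1)m + k)}(w_k) = \Lambda^m \lambdab^{(k)}(w_k)$ with $\Lambda := \prod_{i=1}^{l+1} \lambda_{w_i}$, evaluating the last identity at $v = w_k$ forces $c_{(l+1)m + k} = c_k/\Lambda^m$; so $f$ on the $k$-th class is determined by the single scalar $c_k$ for $k \in \{1, \ldots, l+1\}$. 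A class-by-class computation then produces
\[
\|f\|^2 \;=\; \sum_{k=1}^{l+1} |c_k|^2 \, \lambdab^{(k)}(w_k)^2 \, A_k,
\]
where $A_k$ denotes the $k$-th series in \eqref{series}.

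The forward implication is now immediate: if every $A_k$ diverges, then $\|f\|^2 < \infty$ forces $c_k = 0$ for every $k$, so $f = 0$ and $S_{\lambdab, \mathscr G}^\infty(\ell^2(W)) = \{0\}$. For the converse, assume $A_{k_0} < \infty$ and define $f$ by the same recipe with $c_{k_0} = 1$ and $c_k = 0$ for $k \neq k_0$, so that $f$ is supported on the $k_0$-th class and $\|f\|^2 = \lambdab^{(k_0)}(w_{k_0})^2 A_{k_0} < \infty$. The crux of the argument is then to verify the eigen-identity
\[
S_{\lambdab, \mathscr G}^{l+1} f \;=\; \Lambda f.
\]
One obtains it by evaluating $(S_{\lambdab, \mathscr G}^{l+1} f)(v) = \lambdab^{(l+1)}(v) \, f(\mathsf{par}^{l+1}(v))$ separately on $v = w_{k_0}$ and on tree vertices $v$ at tree-depth $k_0 + (l+1)m$, using the telescoping relation $\lambdab^{(l+1)}(v) \cdot \lambdab^{(k_0)}(w_{k_0}) = \Lambda \cdot \lambdab^{(k_0)}(v)$. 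This identity yields $f = S_{\lambdab, \mathscr G}^{(l+1)m}(\Lambda^{-m} f)$ for every $m \geqslant 1$, and since $S_{\lambdab, \mathscr G}^{(l+1)m} \ell^2(W) \subseteq S_{\lambdab, \mathscr G}^n \ell^2(W)$ whenever $(l+1)m \geqslant n$, the nonzero vector $f$ lies in every $S_{\lambdab, \mathscr G}^n \ell^2(W)$. Hence $S_{\lambdab, \mathscr G}$ fails to be analytic.

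The main obstacle is the combinatorial bookkeeping around the circuit: one must track carefully how $\mathsf{par}^n$ acts on the vertices as a function of the residue of $n$ modulo $l+1$, derive the recursion $c_{n+l+1} = c_n/\Lambda$ cleanly from the consistency conditions, and confirm the telescoping identity that underlies the eigen-relation $S_{\lambdab, \mathscr G}^{l+1} f = \Lambda f$ in the non-analytic direction.
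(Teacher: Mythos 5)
Your proof is correct and follows essentially the same route as the paper: your iterated action formula and the resulting consistency conditions are Lemma \ref{power}, the reduction of a hyper-range vector to the scalars $c_1,\dots,c_{l+1}$ via the partition \eqref{partition} is Lemma \ref{main}(i) (your $f$ in the converse is $\lambdab^{(k_0)}(w_{k_0})\,g_{k_0}$ from \eqref{gk}), and your norm identity $\|f\|^2=\sum_{k}|c_k|^2\lambdab^{(k)}(w_k)^2A_k$ is the paper's disjoint-support argument. Two minor remarks: your ``pivotal observation'' is misstated, since for $n=(l+1)m+k$ the fibre $(\mathsf{par}^{n})^{-1}(\rootb)$ contains both $w_k$ and $\childm{n}{T}{\rootb}$, which lie in \emph{different} blocks of \eqref{partition} --- indeed it is precisely this overlap, which you do use when evaluating at $v=w_k$, that yields $c_{(l+1)m+k}=c_k/\Lambda^m$, so the slip is harmless; and in the converse your eigen-identity $S_{\lambdab,\mathscr G}^{l+1}f=\Lambda f$ is a slightly more direct substitute for the paper's verification that $g_{k_0}$ satisfies the constancy criterion of Lemma \ref{main}(ii) at every level.
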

\begin{remark} \label{left-i}
Note that the set $\{S_{\lambdab, \mathscr G}e_w\}_{w \in W}$ is orthogonal in $\ell^2(W)$. It is now easy to see that the assumption $\inf \lambdab >0$ implies that $S_{\lambdab, \mathscr G}$ is left-invertible, and hence the hyper-range of $S_{\lambdab, \mathscr G}$ is closed in $\ell^2(W)$. Finally, note that since $\mathscr G$ is locally finite, the first sum in \eqref{series} is finite.  
\end{remark}

In the proof of Theorem \ref{main-1}, we need a couple of lemmas. 
We begin with the following variant of a known fact pertaining to weighted composition operators (see \cite[Theorem 2.1.1]{K} and \cite[Remark 45]{BJJS-0}).
%Since its reference is not easily available, we include a proof for the sake of completeness.

\begin{lemma} \label{power}
Let $\mathscr G = (W, F)$ be a one-circuit directed graph associated with a locally finite rooted directed tree. Let $S_{\lambdab, \mathscr G}$ be a weighted shift on $\mathscr G$ such that $\inf \lambdab > 0$.  Then, for any positive integer $k$, the range of $S^k_{\lambdab, \mathscr G}$ is given by
\beqn
%\ran S^k_{\lambdab, \mathscr G}= 
\Big\{\!f \in \ell^2(W) : \frac{f}{\lambdab^{(k)}}\Big|_{\childm{k}{G}{w}} \mbox{is constant for each } w \in W \! \Big\},
\eeqn
where $\lambdab^{(k)}$ is given by \eqref{lambda-k}.
\end{lemma}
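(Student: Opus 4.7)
The plan is to reduce the lemma to the explicit formula
\[
(S^k_{\lambdab, \mathscr G} f)(w) \;=\; \lambdab^{(k)}(w)\, f(\mathsf{par}^k w), \qquad f \in \ell^2(W),\ w \in W,
\]
which I would establish by a straightforward induction on $k$, using that in a one-circuit directed graph $\mathsf{par}$ is a genuine (not partial) function on $W$, so that $\mathsf{par}^k$ is unambiguously defined. With this formula in hand, both inclusions become essentially book-keeping; the only non-trivial point is producing an $\ell^2$-preimage for elements in the claimed right-hand side.

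For the inclusion of the range into the right-hand side, suppose $g = S^k_{\lambdab, \mathscr G} f$. For every $v \in W$ and every $w \in \childm{k}{G}{v}$ one has $\mathsf{par}^k w = v$, and hence
\[
\frac{g(w)}{\lambdab^{(k)}(w)} \;=\; f(v),
\]
which depends only on $v$. Thus $g/\lambdab^{(k)}$ is constant on each set $\childm{k}{G}{v}$, as required.

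For the reverse inclusion, fix $g$ in the right-hand side. Since $\mathscr G$ is leafless, an easy induction shows that $\childm{k}{G}{v} \neq \varnothing$ for every $v \in W$, so $\mathsf{par}^k : W \to W$ is surjective and the non-empty fibers $\{\childm{k}{G}{v}\}_{v \in W}$ partition $W$. Choose one representative $w_v \in \childm{k}{G}{v}$ for each $v \in W$ and define
\[
f(v) \;:=\; \frac{g(w_v)}{\lambdab^{(k)}(w_v)},
\]
which is unambiguous by the constancy hypothesis on $g$. Since $\lambdab^{(k)}(w_v) \geqslant (\inf \lambdab)^k > 0$ and the $w_v$'s are distinct elements of $W$, we obtain
\[
\|f\|^2 \;\leqslant\; \frac{1}{(\inf \lambdab)^{2k}} \sum_{v \in W} |g(w_v)|^2 \;\leqslant\; \frac{\|g\|^2}{(\inf \lambdab)^{2k}},
\]
so $f \in \ell^2(W)$. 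A direct application of the iterated formula then yields $S^k_{\lambdab, \mathscr G} f = g$, completing the proof.

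I do not anticipate any real obstruction. The two ingredients where care is needed, namely that $\mathsf{par}^k$ is total and surjective and that the preimage is genuinely in $\ell^2(W)$, are both resolved directly by the standing hypotheses (one-circuit structure, leaflessness, and $\inf \lambdab > 0$); no closed-range argument is required, since $S_{\lambdab, \mathscr G}$ is already left-invertible by the preceding remark.
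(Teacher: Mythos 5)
Your proposal is correct and follows essentially the same route as the paper: both directions hinge on the identity $(S^k_{\lambdab,\mathscr G}f)(w)=\lambdab^{(k)}(w)f(\mathsf{par}^k(w))$, the forward inclusion is the same computation, and the reverse inclusion constructs the same preimage (well-defined by the constancy hypothesis, in $\ell^2$ because $\inf\lambdab>0$ and the fibers $\childm{k}{G}{v}$ are pairwise disjoint). Your write-up is merely a bit more explicit than the paper's about the totality of $\mathsf{par}^k$ and the $\ell^2$ estimate, which the paper states without detail.
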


\begin{proof}
Suppose that $f \in \ran S^k_{\lambdab, \mathscr G}$. Thus there exists $g \in \ell^2(W)$ such that $f(w) = \lambdab^{(k)}(w) g(\mathsf{par}^k(w))$, $w \in W$. Let $v \in W$ be fixed and let $u, w \in \childm{k}{G}{v}$. Then 
\beqn \frac{f(u)}{\lambdab^{(k)}(u)} = g(\mathsf{par}^k(u)) = g(v) = g(\mathsf{par}^k(w)) = \frac{f(w)}{\lambdab^{(k)}(w)}. \eeqn
Thus $\frac{f}{\lambdab^{(k)}}|_{\childm{k}{G}{w}}$ is constant.
Conversely, suppose that $f \in \ell^2(W)$ is such that $\frac{f}{\lambdab^{(k)}}|_{\childm{k}{G}{w}}$ is constant for each $w \in W$. This allows us to define $g : W \rar \mathbb C$ by
\beqn
g(w) = \frac{f(v)}{\lambdab^{(k)}(v)}, \quad v \in \childm{k}{G}{w}.
\eeqn
Since $\inf \lambdab > 0$ and $f \in \ell^2(W)$,  $g \in \ell^2(W)$. Further, $$( S^k_{\lambdab, \mathscr G}g) (w)= \lambdab^{(k)}(w) g(\mathsf{par}^k(w)) = f(w), \quad w \in W.$$ Thus $f \in \ran S^k_{\lambdab, \mathscr G}$ and the proof is over.
%\beqn
%\ran S_{\lambdab, \mathscr G}= \Big\{f \in \ell^2(W) : \frac{f}{\lambdab}\Big|_{\child{w}} \mbox{ is constant for each } w \in W\Big\}.
%\eeqn 
%\begin{case}
%Arbitrary $k:$
%\end{case}
%Assume the conclusion of the lemma for $k$. Let $f \in \ran C^{k+1}_{\phi, \lambdab}$. Thus 
\end{proof}

We next analyze the hyper-range of weighted shifts on a one-circuit directed graph associated with the locally finite rooted directed tree.
\begin{lemma} \label{main}
Let $l \in \mathbb N$ and let $\mathscr G = (W, F)$ be a one-circuit directed graph associated with the locally finite rooted directed tree $\mathscr T = (V, E)$ with root $\rootb$ and the circuit $\{w_1, \ldots, w_{l+1}\},$ where $w_{l+1}:=\rootb.$  Let $S_{\lambdab, \mathscr G}$ be a weighted shift on $\mathscr G$ such that $\inf \lambdab > 0$ and let \beq \label{gk}
g_k &:=& \!\!\!\sum_{w \in \childm{k}{G}{\rootb}} \frac{\lambdab^{(k)}(w)}{\lambdab^{(k)}(w_k)} e_w  \notag  +   \sum_{m=1}^{\infty}  ~\sum_{w \in \childm{(l+1)m+k}{T}{\rootb}}\frac{\lambdab^{((l+1)m + k)}(w)}{\lambdab^{((l+1)m + k)}(w_k)} e_w, \\ && \hspace*{7.5cm} k=1, \ldots, l+1.  
\eeq 
Then the following statements hold:
\begin{enumerate}
\item[(i)] 
%The dimension of the hyper-range of $S_{\lambdab, \mathscr G}$ is at most $l+1$.  
The hyper-range of $S_{\lambdab, \mathscr G}$ is spanned by $h_k: W \rar [0, \infty)$, $k=1, \ldots, l+1,$ given by
\beqn
h_k := \begin{cases} g_k & \mbox{if the series in \eqref{series} converges}, \\
  0 & \mbox{otherwise}. 
  \end{cases}
  \eeqn 
\item[(ii)] The hyper-range of $S_{\lambdab, \mathscr G}$ is given by
\beqn
\Big\{\!f \in \ell^2(W) : \frac{f}{\lambdab^{(k)}}\Big|_{\childm{k}{G}{\rootb}} \mbox{is constant for each } k \geqslant 1 \! \Big\}.
\eeqn
\end{enumerate}
\end{lemma}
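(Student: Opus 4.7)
The plan is to prove $(ii)$ first using Lemma~\ref{power}, and then deduce $(i)$ by parametrizing the hyper-range via the constants appearing on each $\childm{k}{G}{\rootb}$.

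For $(ii)$, Lemma~\ref{power} gives $\ran S^n_{\lambdab, \mathscr G} = \{f \in \ell^2(W) : f/\lambdab^{(n)} \text{ is constant on } \childm{n}{G}{w} \text{ for every } w \in W\}$, and the hyper-range is the intersection over $n \geq 1$. I would show that imposing this constancy only on $\childm{n}{G}{\rootb}$ (for all $n \geq 1$) already suffices. Since $\mathsf{par}$ is a single-valued function on a one-circuit directed graph, for each $w \in W$ there exists $d(w) \in \mathbb N$ with $\mathsf{par}^{d(w)}(w) = \rootb$, and hence $\childm{n}{G}{w} \subseteq \childm{n+d(w)}{G}{\rootb}$. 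A telescoping shows that
\[
\frac{\lambdab^{(n+d(w))}(v)}{\lambdab^{(n)}(v)} \;=\; \prod_{j=0}^{d(w)-1}\lambdab(\mathsf{par}^j(w)),
\]
which is independent of $v \in \childm{n}{G}{w}$, so constancy of $f/\lambdab^{(n+d(w))}$ on the larger set transfers to constancy of $f/\lambdab^{(n)}$ on the smaller one. This settles $(ii)$.

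For $(i)$, I would use $(ii)$ to write any $f$ in the hyper-range as $f|_{\childm{k}{G}{\rootb}} = c_k\,\lambdab^{(k)}|_{\childm{k}{G}{\rootb}}$ for each $k \geq 1$, with constants $c_k \in \mathbb C$. The circuit relation $\mathsf{par}^{l+1}(\rootb) = \rootb$ places $w_k$ into $\childm{k+m(l+1)}{G}{\rootb}$ for every $m \geq 0$ (with the convention $w_{l+1} = \rootb$), and evaluating $f(w_k)$ via the two representations yields
\[
c_{k+m(l+1)} \, \lambdab^{(k+m(l+1))}(w_k) \;=\; c_k \, \lambdab^{(k)}(w_k),
\]
so the entire sequence $(c_n)_{n \geq 1}$ is recovered from $(c_1, \ldots, c_{l+1})$ alone. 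Expanding $\|f\|^2$ along the partition \eqref{partition}, substituting the above relation to eliminate $c_{m(l+1)+k}$ in favour of $c_k$, and factoring out $|c_k\,\lambdab^{(k)}(w_k)|^2$, the sum collapses into
\[
\|f\|^2 \;=\; \sum_{k=1}^{l+1} \big|c_k\,\lambdab^{(k)}(w_k)\big|^2 \, \Sigma_k,
\]
where $\Sigma_k$ is precisely the $k$-th series of \eqref{series}. Hence $f \in \ell^2(W)$ if and only if $c_k = 0$ whenever $\Sigma_k = \infty$. The vectors $g_k$ in \eqref{gk} correspond exactly to the choice $c_k = 1/\lambdab^{(k)}(w_k)$ with $c_j = 0$ for $j \neq k$ in $\{1,\ldots, l+1\}$, and they lie in $\ell^2(W)$ precisely when $\Sigma_k < \infty$. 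Thus the hyper-range is spanned by $\{h_k : 1 \leq k \leq l+1\}$, as claimed.

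The main obstacle I anticipate is the bookkeeping in the norm computation: the partition \eqref{partition} mixes the block $\bigsqcup_{k=1}^{l+1}\childm{k}{G}{\rootb}$ (which contains the circuit vertices $\rootb, w_1, \ldots, w_l$) with the infinite tail of purely tree descendants $\bigsqcup_{k,m}\childm{(l+1)m+k}{T}{\rootb}$, and one must track carefully that $w_k$ appears in $\childm{k}{G}{\rootb}$ but in no later block of the partition, so that the scaling factors $\lambdab^{(m(l+1)+k)}(w_k)$ line up with the denominators appearing in \eqref{series}. Once the norm identity is in place, the spanning description is immediate.
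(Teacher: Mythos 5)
Your argument is correct and follows essentially the same route as the paper: both rest on Lemma~\ref{power}, the partition \eqref{partition}, and the observation that $w_k\in\childm{(l+1)m+k}{G}{\rootb}$ forces $f=\sum_{k=1}^{l+1}f(w_k)\,g_k$ with the $g_k$ of \eqref{gk} having disjoint supports, so that $f(w_k)=0$ whenever the $k$-th series \eqref{series} diverges. The only cosmetic difference is that you establish (ii) first and deduce (i) from it, whereas the paper proves (i) directly from Lemma~\ref{power} applied at the root; your telescoping reduction of constancy on $\childm{n}{G}{w}$ to constancy on $\childm{n+d(w)}{G}{\rootb}$ is the paper's argument for (ii) verbatim.
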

\begin{proof} Let $f$ belong to the hyper-range of $S_{\lambdab, \mathscr G}$. 
%\beqn
%W = \{w_1, \ldots, w_{l+1}\} \sqcup \left(\sqcup_{k=1}^{l+1} \sqcup_{m=0}^{\infty} \childn{(l+1)m+k}{\rootb}_{\mathscr T} \right)
%\eeqn
By \eqref{partition}, $f$ can be decomposed as 
\beq \label{decom}
f \,=\, \sum_{k=1}^{l+1} \sum_{w \in \childm{k}{G}{\rootb}}f(w)e_w ~+~ \sum_{k=1}^{l+1} \sum_{m=1}^{\infty} \sum_{w \in \childm{(l+1)m+k}{T}{\rootb}}f(w)e_w.
\eeq
Further, Lemma \ref{power} combined with the fact that $w_k \in \childm{(l+1)m+k}{G}{\rootb}$ yields the following identities:
\beqn
f(w) = f(w_k) \, \frac{\lambdab^{((l+1)m + k)}(w)}{\lambdab^{((l+1)m + k)}(w_k)}, \quad w \in \childm{(l+1)m+k}{G}{\rootb}, \\ m \in \mathbb N, ~~k=1, \ldots, l+1.
\eeqn
Substituting this into \eqref{decom}, we obtain $f= \sum_{k=1}^{l+1} f(w_k) g_k$, where $g_k,$ $k=1, \ldots, l+1,$ is given by
\eqref{gk}.
%Let $I:=\{k \in \{1, \ldots, l +1\} : f(w_k) \neq 0\}.$ Since $f \in \ell^2(W)$, $g_k \in \ell^2(W)$ for every $k \in I.$ This shows that 
Since $f \in \ell^2(W)$ and supports of $g_1, \ldots, g_{l+1}$ are disjoint, if $g_k \notin \ell^2(W)$ for some $k=1, \ldots, l+1$, then $f(w_k)=0$.  
%This shows that the dimension of the hyper-range of $S_{\lambdab, \mathscr G}$ is at most $l+1$. 
This completes the proof of (i). 

To see (ii), note that by Lemma \ref{power}, $f$ is in the hyper-range of $S_{\lambdab, \mathscr G}$ if and only if 
\beq\label{eq1}
\frac{f}{\lambdab^{(k)}}\Big|_{\childm{k}{G}{w}} \mbox{ is constant for each } w \in W \mbox{ and } k \geqslant 1.
\eeq
We claim that \eqref{eq1} is equivalent to 
\beq\label{eq2}
\frac{f}{\lambdab^{(k)}}\Big|_{\childm{k}{G}{\rootb}} \mbox{ is constant for each }  k \geqslant 1.
\eeq
Clearly, \eqref{eq1} implies \eqref{eq2} by taking $w = \rootb$. Suppose that \eqref{eq2} holds. Let $w \in W$. By \eqref{partition}, there exists $n \in \mathbb N$ such that $w \in \childm{n}{G}{\rootb}$. Now for any $k \in \mathbb N$, 
\beqn
\childm{k}{G}{w} \subseteq \childm{n+k}{G}{\rootb}.
\eeqn
Let $u, v \in \childm{k}{G}{w}$. Then
\beqn
\frac{f(u)}{\lambdab^{(k)}(u)} &\overset{\eqref{lambda-k}}=& 
%\frac{f(u)}{\lambda_u \cdots \lambda_{\mathsf{par}^{k-1}(u)}} \frac{\lambda_w \cdots \lambda_{\mathsf{par}^{n-1}(w)}}{\lambda_w \cdots \lambda_{\mathsf{par}^{n-1}(w)}}\\
%&=& 
\frac{f(u)}{\lambdab^{(n+k)}(u)} \big(\lambda_w \cdots \lambda_{\mathsf{par}^{n-1}(w)}\big) \\
&\overset{\eqref{eq2}}=& \frac{f(v)}{\lambdab^{(n+k)}(v)} (\lambda_w \cdots \lambda_{\mathsf{par}^{n-1}(w)}) \\ &=& \frac{f(v)}{\lambdab^{(k)}(v)}.
\eeqn
This establishes \eqref{eq1} and hence completes the proof.
\end{proof}

%We next discuss a partial converse to Proposition \ref{main}(ii), which is crucial for the construction carried out in the subsequent section.

We now complete the proof of Theorem \ref{main-1}.
\begin{proof}[Proof of Theorem \ref{main-1}] If, for each $k=1, \ldots, l+1,$ the series in \eqref{series} diverges,  then by Lemma \ref{main}(i), $S_{\lambdab, \mathscr G}$ is analytic. 
Conversely, suppose that for some $k=1, \ldots, l+1$, the series in \eqref{series} converges. In particular, $g_k$, as given by \eqref{gk}, belongs to $\ell^2(W)$. 
We check that $g_k$ belongs to the hyper-range of $S_{\lambdab, \mathscr G}$.
In view of Lemma \ref{main}(ii), we only need to verify that $f:=g_k$ satisfies \eqref{eq2}. 
To see that, let $n \geqslant 1$ be an integer and write $n=(l+1)p+r$ for some $p \in \mathbb N$ and $0 \leqslant r < l+1$. It now follows from \eqref{rel} that 
\beqn
 \childm{n}{G}{\rootb} = \begin{cases} \childm{n}{T}{\rootb} \sqcup \{w_r\} & \mbox{if~}r >0, \\
 \childm{n}{T}{\rootb} \sqcup \{\rootb\} & \mbox{otherwise}.
 \end{cases}
 \eeqn
Thus by \eqref{gk}, we have \beqn \frac{g_k}{\lambdab^{(n)}}\Big|_{\childm{n}{G}{\rootb}} = \begin{cases}   \frac{1}{\lambdab^{(n)}(w_k)} & \mbox{if~}r=k, \\
\frac{1}{\lambdab^{(n)}(\rootb)} & \mbox{if~}r=0~\mbox{and~}k=l+1,\\ 0 & \mbox{otherwise}. 
\end{cases}
\eeqn
This completes the proof of the theorem.
\end{proof}

\section{Analytic $3$-isometries without wandering subspace property}

We now construct a family of cyclic analytic $3$-isometries, which do not admit the wandering subspace property and which are norm-increasing on the orthogonal complement of a one-dimensional space.

\begin{example} \label{construction}
Consider the directed tree $\mathscr T$ with the set of vertices 
$V:=\mathbb{N}$ and $\mathsf{root}=0$. We further require that $\mathsf{Chi}_{\mathscr T}(n)=\{n+1\}$ for
all 
$n \in V$ (see \cite[Equation (6.2.10)]{JJS}). 
Let $\mathscr G = (W, F)$ be the following one-circuit directed graph associated with the rooted directed tree $\mathscr T$:
\begin{figure}[H]
\begin{tikzpicture}[scale=.8, transform shape]
\tikzset{vertex/.style = {shape=circle,draw, minimum size=1em}}
\tikzset{every loop/.style = {min distance = 15mm,  looseness = 15}}
\tikzset{edge/.style = {->,> = latex'}}
% vertices
\node[vertex] (a) at  (0,0) {$0$};
%\node[vertex] (b) at  (2,-1) {$1$};
\node[vertex] (c) at  (2,0) {$1$};
%\node[vertex] (d) at  (4, -1) {$3$};
\node[vertex] (e) at  (4, 0) {$2$};
%\node[vertex] (f) at  (6, -1) {$\ldots$};
\node[vertex] (g) at  (6, 0) {$\ldots$};
%\node[vertex] (h) at  (8, -1) {$\ldots$};

%\tikzset{vertex/.style = {shape=circle,draw, blue, minimum size=1em}}
%\node[vertex] (i) at  (8, 1) {$\ldots$};

%edges
\draw[edge] (a) to (c);
%\draw[edge] (a) to (c);
\draw[edge] (c) to (e);
%\draw[edge] (c) to (e);
\draw[edge] (e) to (g);
%\draw[edge] (e) to (g);
%\draw[edge] (f) to (h);
%\draw[edge] (g) to (i);

%\draw[edge] (a) to (n);
%\draw[edge] (n) to (l);
%\draw[edge] (l) to (k);
%\draw[edge] (k) to (j);
%\draw[edge] (j) to (m);
%\draw[edge] (m) to (o);
\draw[loop left][edge] (a) to (a);

%\def\Radius{.5}
%\draw (a) arc(0:360:\Radius) -- cycle;

%\draw[edge] (h) to (j);
%\draw[edge] (i) to (k);
%\draw[orange,rotate=45,shift={(3 cm,5 cm)}] \draw[edge] (e) to (g);
\end{tikzpicture}
\end{figure}
\noindent
For positive numbers $a, b$, consider the degree $2$ polynomial $p_{a, b} : \mathbb N \rar (0, \infty)$ given by $p_{a, b}(n) = 1+an + bn^2$, $n \in \mathbb N.$ 
By the Archimedean property, there exists $\epsilon_0 \in (0, 1)$ such that $\epsilon + b(2/\epsilon -1) > a$ for every $\epsilon \in (0, \epsilon_0)$.
For $\epsilon \in (0, \epsilon_0)$, let $\lambdab_{\epsilon}=\{\lambda_w : w \in W\}=\{\lambda_n : n \in \N\}$ denote the weight system of positive real numbers given by
\beq
\label{wts}
\left.
 \begin{array}{ccc}
\lambda_{0}:=\sqrt{1-\epsilon}, \quad
\lambda_1 : =  \sqrt{\frac{\epsilon^{3}}{K_{\epsilon}}} \ \  \mbox{with} \ \ K_{ \epsilon}:= \epsilon^2 - \epsilon(a+b) + 2b > 0, \vspace{.2cm} \\ 
\lambda_n = \sqrt{\frac{p_{a, b}(n-1)}{p_{a, b}(n-2)}}, \quad n \geqslant 2.
\end{array}
\right\}
\eeq
%where $K_{ \epsilon}:= \epsilon^2 - \epsilon(a+b) + b.$
Let $S_{\lambdab_{\epsilon}, \mathscr G}$ (resp. $S_{\lambdab_{\epsilon}, \mathscr T}$) be a weighted shift on $\mathscr G$ (resp. $\mathscr T$). An inductive argument shows that $e_w$ belongs to the linear span of $\{S^n_{\lambdab_{\epsilon}, \mathscr G}e_0 : n \in \mathbb N\}$ for every $w \in W$, and hence
$S_{\lambdab_{\epsilon}, \mathscr G}$ is cyclic with cyclic vector $e_0$:
\beqn
\bigvee \{S^n_{\lambdab_{\epsilon}, \mathscr G}e_0 : n \in \mathbb N\} = \ell^2(W).
\eeqn
%Since $S_{\lambdab_{\epsilon}, \mathscr T} \in \mathcal B(\ell^2(V))$ is left-invertible, 
By \eqref{wts} and Remark \ref{left-i}, $S_{\lambdab_{\epsilon}, \mathscr G} \in \mathcal B(\ell^2(W))$ is left-invertible. 
Note that Cauchy dual operator $S'_{\lambdab_{\epsilon}, \mathscr G}$ is the weighted shift $S_{\lambdab'_{\epsilon}, \mathscr G}$ on $\mathscr G$ with weight system $\lambdab'_{\epsilon}=\{\lambda'_w : w \in W\}$ given by 
\beq
\label{dual-wts}
\lambda'_{0} = \frac{\lambda_{0}}{\lambda^2_{0} + \lambda^2_1}, ~~ 
\lambda'_{1} = \frac{\lambda_1}{\lambda^2_{0} + \lambda^2_1}, ~~
\lambda'_n = \sqrt{\frac{p_{a, b}(n-2)}{p_{a, b}(n-1)}}, \quad n \geqslant 2. 
\eeq
{\it
Then we have the following statements:
\begin{enumerate}
\item  $S_{\lambdab_{\epsilon}, \mathscr G}$ is an analytic $3$-isometry.
\item $S_{\lambdab_{\epsilon}, \mathscr G}$ is norm-increasing if and only if $K_{ \epsilon} \leqslant \epsilon^2$.
\item $S_{\lambdab_{\epsilon}, \mathscr G}$ admits the wandering subspace property if and only if \beq \label{wsp-epsi} K_{ \epsilon} < \frac{\epsilon^3}{ \sqrt{1-\epsilon}(1-\sqrt{1-\epsilon})}. \eeq
\item $S'_{\lambdab_{\epsilon}, \mathscr G}$ admits Wold-type decomposition if and only if \eqref{wsp-epsi} holds.
\end{enumerate}
}
We verify the above statements as follows.
Since $\{p_{a, b}(n)\}_{n \in \mathbb N}$ is an increasing sequence, $S_{\lambdab_{\epsilon}, \mathscr T}$ is a norm-increasing weighted shift on $\mathscr T$.
Further, since $\|S^k_{\lambdab_{\epsilon}, \mathscr T}e_1\|^2 = p_{a, b}(k),$ $k \in \mathbb N,$ is a degree $2$ polynomial,  
by \cite[Theorem 2.1]{AL}, $S_{\lambdab_{\epsilon}, \mathscr T}$ is a $3$-isometry. 
Since for any positive integer $k,$ the sequence $\{S^k_{\lambdab_{\epsilon}, \mathscr G}e_w\}_{w \in W}$ is orthogonal, $S_{\lambdab_{\epsilon}, \mathscr G}$ is a norm-increasing $3$-isometry if and only if
\beqn
 \|S_{\lambdab_\epsilon, \mathscr G}e_{0}\|  ~\geqslant ~ 1, \hspace{3.5cm} \\
 1- 3\|S_{\lambdab_\epsilon, \mathscr G}e_{0}\|^2 + 3\|S^2_{\lambdab_\epsilon, \mathscr G}e_{0}\|^2 - \|S^3_{\lambdab_\epsilon, \mathscr G}e_{0}\|^2 ~=~0.
\eeqn
In view of \eqref{wts}, $\|S_{\lambdab_{\epsilon}, \mathscr G}e_{0}\|  \geqslant  1$ if and only if $K_{ \epsilon} \leqslant \epsilon^2.$ It is thus  
easy to see that the above identities simplify to
\beq
%\label{3-iso}
  K_{ \epsilon} \leqslant \epsilon^2,  \hspace{3.5cm}  \notag  \\  
 \left. 
 \begin{array}{ccc} 
\label{3-iso} 
  1 - 3\lambda^2_{1} +3\lambda^2_{1}\lambda^2_{2} - \lambda^2_{1}\lambda^2_{2}\lambda^2_{3}  \vspace{.2cm}  \\    = ~~\lambda^2_{0}(3 - 3 \lambda^2_{0} - 3\lambda^2_{1} + \lambda^4_{0} +  \lambda^2_{0}\lambda^2_1 
 +\lambda^2_{1} \lambda^2_2).
 \end{array}
\right\}
\eeq
It is easy to see that \eqref{3-iso} is immediate from \eqref{wts}.
Thus $S_{\lambdab_{\epsilon}, \mathscr G}$ is a $3$-isometry, which is norm-increasing if and only if $K_{ \epsilon} \leqslant \epsilon^2.$ 
On the other hand, by Theorem \ref{main-1}, $S_{\lambdab_{\epsilon}, \mathscr G}$ is analytic if and only if 
\beqn 
 1 + \left(\frac{\lambdab^{(1)}_\epsilon(1)}{\lambdab^{(1)}_\epsilon(0)}\right)^2 + 
\sum_{m=1}^{\infty}  \  \left(\frac{\lambdab^{(m + 1)}_\epsilon(m+1)}{\lambdab^{(m + 1)}_\epsilon(0)}\right)^2 = \infty.
\eeqn
%where $\lambdab^{(k)}_{\epsilon}$ is given by \eqref{lambda-k}.
%\beqn 
%\sum_{m=0}^{\infty}  \  \left(\frac{\lambdab^{(m + 1)}(m + 1)}{\lambdab^{(m + 1)}(0)}\right)^2 = \infty.
%\eeqn
A routine inductive argument using \eqref{lambda-k} shows however that for $m \in \mathbb N,$
\beqn 
\lambdab^{(m + 1)}_\epsilon(m + 1) &=& \prod_{j=1}^{m + 1} \lambda_j   \overset{\eqref{wts}} = \lambda_1 \, p_{a, b}(m),\\
\lambdab^{(m + 1)}_\epsilon(0) &=&   \lambda^{m+1}_{0}.
\eeqn
Since $p_{a, b}(m) \geqslant 1$ for every $m \in \mathbb N$, we have
\beqn 
\sum_{m=0}^{\infty}  \  \left(\frac{\lambdab^{(m + 1)}_\epsilon(m + 1)}{\lambdab^{(m + 1)}_\epsilon(0)}\right)^2  ~=~   \sum_{m=0}^{\infty}   \frac{\lambda_1 \, p_{a, b}(m)}{\lambda^{2(m+1)}_{0}}  ~\geqslant ~ \lambda_1 \sum_{m=0}^{\infty}   \frac{1}{\lambda^{2(m+1)}_{0}}.
\eeqn
Also, since $\lambda_{0} <1$,  the series on the right hand side diverges, and hence
$S_{\lambdab_{\epsilon}, \mathscr G}$ is analytic. This completes verifications of (i) and (ii).
The above argument applied to the Cauchy dual operator $S_{\lambdab'_{\epsilon}, \mathscr G}$  together with \eqref{dual-wts} yields that $S_{\lambdab'_{\epsilon}, \mathscr G}$ is analytic if and only if the following series diverges:
\beqn 
\sum_{m=0}^{\infty}  \  \left(\frac{\lambdab'^{(m + 1)}_\epsilon(m + 1)}{\lambdab'^{(m + 1)}_\epsilon(0)}\right)^2 = \sum_{m=0}^{\infty}  \frac{(\lambda^2_{0} + \lambda^2_1)^{2(m+1)}}{\lambda^{2(m+1)}_{0}} \frac{\lambda'_1}{ p_{a, b}(m)}.
\eeqn
Since $p_{a, b}$ is a degree $2$ polynomial with positive coefficients, the above series diverges if and only if $\lambda^2_{0} + \lambda^2_1 > \lambda_0$.
On the other hand,  by \eqref{wts}, we obtain \beqn \lambda^2_{0} + \lambda^2_1 > \lambda_0 ~ \Longleftrightarrow ~ 1-\epsilon +  \frac{\epsilon^3}{K_{ \epsilon}} > \sqrt{1-\epsilon}  ~ \Longleftrightarrow ~
%K_{ \epsilon} < \frac{\epsilon^3}{\sqrt{1-\epsilon}(1-\sqrt{1-\epsilon})}.
\eqref{wsp-epsi} ~\mbox{holds}.
\eeqn
%Thus if $6b -2 a \geqslant \sqrt{2}$,
%then $S_{\lambdab'_{\epsilon}, \mathscr G}$ is not analytic, and hence 
Thus $S_{\lambdab'_{\epsilon}, \mathscr G}$ is analytic if and only \eqref{wsp-epsi} holds.
However, by \cite[Proposition 3.4]{S}, $S_{\lambdab_{\epsilon}, \mathscr G}$  admits the wandering subspace property if and only if $S_{\lambdab'_{\epsilon}, \mathscr G}$ is analytic. This yields (iii).
To see (iv), 
recall that $S_{\lambdab_{\epsilon}, \mathscr G}$ admits Wold-type decomposition if and only if $S'_{\lambdab_{\epsilon}, \mathscr G}$ admits Wold-type decomposition (see \cite[Proposition 3.4]{S}). Further, by (i) and (iii),  $S_{\lambdab_{\epsilon}, \mathscr G}$ admits Wold-type decomposition if and only if \eqref{wsp-epsi} holds. Combining last two observations, we obtain (iv).
It is interesting to note that in case \eqref{wsp-epsi} does not hold, then the hyper-range of $S_{\lambdab'_{\epsilon}, \mathscr G}$ is spanned by \beqn g=e_0 + \sum_{m=1}^{\infty} \frac{\lambdab'^{(m)}_{\epsilon}(m)}{\lambdab'^{(m)}_{\epsilon}(0)}e_m. \eeqn In particular, the space spanned by $g$ is not invariant under $S^*_{\lambdab'_{\epsilon}, \mathscr G}$ (since $\lambdab'_{\epsilon}$ is not eventually constant in view of \eqref{dual-wts}). 
%Note that (iv) is an immediate consequence of \cite[Proposition 3.4]{S}, (i) and (iii). %Alternatively, this may be deduced from the fact that the hyper-range of $S_{\lambdab'_{\epsilon}, \mathscr G}$, spanned by \beqn g=e_0 + \sum_{m=1}^{\infty} \frac{\lambdab'^{(m)}_{\epsilon}(m)}{\lambdab'^{(m)}_{\epsilon}(0)}e_m, \eeqn is not invariant under $S^*_{\lambdab'_{\epsilon}, \mathscr G}$ (since $\lambdab'_{\epsilon}$ is not eventually constant in view of \eqref{dual-wts}). 
%This completes verifications of (i)-(iv). 

Let us discuss some consequences of (i)-(iv). Firstly,
since $0 < \epsilon < 1,$ we obtain $\epsilon^2 \leqslant \frac{\epsilon^3}{ \sqrt{1-\epsilon}(1-\sqrt{1-\epsilon})},$
%$$1 \leqslant \frac{\epsilon}{ \sqrt{1-\epsilon}(1-\sqrt{1-\epsilon})} ~~\mbox{or~}~\epsilon^2 \leqslant \frac{\epsilon^3}{ \sqrt{1-\epsilon}(1-\sqrt{1-\epsilon})},$$ 
and hence it follows from (ii) and (iii) that if $S_{\lambdab_{\epsilon}, \mathscr G}$ is norm-increasing, then $S_{\lambdab_{\epsilon}, \mathscr G}$ necessarily  admits the wandering subspace property. Secondly,
since \beqn \lim_{\epsilon \rar 0^+}K_{\epsilon} = 2b > 0, \quad \lim_{\epsilon \rar 0^+} \frac{\epsilon^3}{ \sqrt{1-\epsilon}(1-\sqrt{1-\epsilon})}=0, \eeqn
we may conclude from (i) and (iii) that there exists $\epsilon_1 \in (0, \epsilon_0)$ such that $S_{\lambdab_{\epsilon}, \mathscr G}$ is an analytic $3$-isometry without the wandering subspace property
for every $\epsilon \in (0, \epsilon_1)$. It is worth noting that $S_{\lambdab_{\epsilon}, \mathscr G}$ is always norm-increasing on the orthogonal complement of the space spanned by $e_{0}$. Although $S_{\lambdab_{\epsilon}, \mathscr G}$ is not norm-increasing, we have
$$\|S_{\lambdab_{\epsilon}, \mathscr G}e_0\|^2 = 1-\epsilon +  \frac{\epsilon^3}{K_{ \epsilon}} \rar 1^{-}~ \mbox{as} ~\epsilon \rar 0^+.$$ 
In particular, Question \ref{Q} has a negative answer if we replace the assumption $\|S_{\lambdab_{\epsilon}, \mathscr G}e_0\| \geqslant 1$ by $\|S_{\lambdab_{\epsilon}, \mathscr G}e_0\| \geqslant 1-\delta$ for arbitrarily small $\delta > 0$.
\eop
\end{example}
\begin{remark}
Let $l \in \mathbb N$ and let $\mathscr G=(W, F)$ be a  one-circuit directed graph associated with the rooted directed tree (as in the preceding example) and the circuit $\{w_1, \ldots, w_{l+1}\}$, where $w_{l+1}:=\rootb$ is the only branching vertex of $\mathscr G$.
It is not difficult to see that there are no norm-increasing analytic $3$-isometry weighted shifts $S_{\lambdab, \mathscr G}$ without the wandering subspace property. 
Indeed, if 
\beqn
\lambda_{w_j} \geqslant 1, \quad j=2, \ldots, l+1, \quad \lambda^2_{w_{1}} + \lambda^2_1 \geqslant 1, \quad \frac{\lambda_{w_{1}}}{\lambda^2_{w_{1}} + \lambda^2_1} > \prod_{j=2}^{l+1} \lambda_{w_j}
\eeqn
(with the convention that product over an empty set is $1$),
then $\lambda_{w_{1}} \geqslant 1$, and hence $\lambda_{w_{1}} \geq \lambda^2_{w_{1}} + \lambda^2_1,$ which is not possible.
\end{remark}

\section{A dichotomy}

As noted above, if $S_{\lambdab, \mathscr G}$ is a norm-increasing analytic $3$-isometry weighted shift on a one-circuit directed graph associated with a rooted directed tree having one branching vertex, then it must admit the wandering subspace property.
In view of this, it is tempting to explore  
the wandering subspace problem for the class of weighted shifts on a one-circuit directed graph associated with a rooted directed tree having more than one branching vertex. However, it turns out that by incorporating more than one branching vertex in the underlying rooted directed tree does not yield an answer to Shimorin's question. Indeed, we have the following general fact.

\begin{lemma}
Let $T \in \mathcal B(\mathcal H)$ be an analytic norm-increasing operator with spectrum contained in the closed unit disc and let $[\ker T^*]_T$ be the $T$-invariant closed subspace generated by $\ker T^* ~(see ~\eqref{WS})$. Then $\mathcal H \ominus [\ker T^*]_T$ is either zero or infinite-dimensional.
\end{lemma}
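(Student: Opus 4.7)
The plan is to assume for contradiction that $\mathcal N := \mathcal H \ominus [\ker T^*]_T$ is finite-dimensional and nonzero, and to produce a nonzero vector in $T^{\infty}(\mathcal H)$, contradicting analyticity. Since $\mathcal M := [\ker T^*]_T$ is $T$-invariant by construction, $\mathcal N$ is $T^*$-invariant. Moreover $T^*|_{\mathcal N}$ is injective, because any element of $\ker T^* \cap \mathcal N$ lies in $\mathcal M \cap \mathcal N = \{0\}$. As $\mathcal N$ is finite-dimensional, $T^*|_{\mathcal N}$ is then a bijection, so it admits a nonzero eigenvalue $\mu$ with some eigenvector $v \in \mathcal N \setminus \{0\}$. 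Since $\bar\mu \in \sigma(T) \subseteq \overline{\mathbb D}$, we have $|\mu| \leqslant 1$.

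The crux is to upgrade $v$ from an eigenvector of $T^*$ to an eigenvector of $T$ having unit-modulus eigenvalue. Because $T$ is norm-increasing, $T^*T \geqslant I$, so $T$ is left-invertible with closed range, and the Cauchy dual $T' := T(T^*T)^{-1}$ is a contraction: $\|T'x\|^2 = \langle (T^*T)^{-1}x, x\rangle \leqslant \|x\|^2$. Since $v \in \mathcal N \perp \ker T^*$, $v$ lies in $\overline{\mathrm{ran}(T)} = \mathrm{ran}(T)$, so $v = T x_1$ with $x_1 = T'^* v = \mu (T^*T)^{-1} v$. Substituting back yields $v = \mu T' v$, i.e.\ $T' v = \mu^{-1} v$. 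As $\mu^{-1}$ is an eigenvalue of the contraction $T'$, $|\mu^{-1}| \leqslant 1$; together with $|\mu| \leqslant 1$ this forces $|\mu| = 1$, so $\mu^{-1} = \bar\mu$.

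Now set $z = (T^*T)^{-1} v$, so that $T z = \bar\mu v$ and hence $\|T z\|^2 = \|v\|^2$. On the other hand, $\|T z\|^2 = \langle T^*T z, z\rangle = \langle v, z\rangle = \langle (T^*T)^{-1} v, v\rangle$. Thus $\langle [I - (T^*T)^{-1}] v, v\rangle = 0$, and since $I - (T^*T)^{-1} \geqslant 0$ (because $T^*T \geqslant I$), we conclude $(T^*T)^{-1} v = v$, i.e.\ $z = v$. Therefore $T v = T z = \bar\mu v$, so $T^n v = \bar\mu^n v$ for all $n$, and in particular $v = T^n(\bar\mu^{-n} v) \in T^n \mathcal H$ for every $n \geqslant 0$. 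Hence $v \in T^{\infty}(\mathcal H) = \{0\}$, contradicting $v \neq 0$.

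The main obstacle is the transition from \emph{``$v$ is an eigenvector of $T^*$''} to \emph{``$v$ is an eigenvector of $T$''}, which uses both hypotheses in an essential way: the norm-increasing assumption makes $T'$ a contraction and renders $I - (T^*T)^{-1}$ positive, whereas the spectral bound pins $|\mu|$ to the unit circle so that the equality case in the ensuing positivity inequality actually applies and produces a genuine eigenvector of $T$ to which analyticity can be applied.
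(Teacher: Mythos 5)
Your proof is correct. It follows the same overall strategy as the paper's proof (produce an eigenvector in $\mathcal H \ominus [\ker T^*]_T$, pin its eigenvalue to the unit circle by playing the contractivity of $T'$ against the spectral bound on $T$, upgrade it to an eigenvector of $T$ itself, and contradict analyticity), but the two key technical inputs are replaced by more elementary, self-contained arguments. The paper starts from Shimorin's identification of $\mathcal H\ominus[\ker T^*]_T$ with the hyper-range of $T'$ \cite[Proposition 3.4]{S}, takes an eigenvector $g$ of the invertible restriction $T'|_{\mathcal H'_u}$, and then invokes the Sz.-Nagy--Foias fact that a contraction and its adjoint have the same fixed points (applied to $\overline{\lambda}T'$) to get $T'^*T'g=g$. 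You instead observe directly that $\mathcal N=\mathcal H\ominus[\ker T^*]_T$ is $T^*$-invariant with $T^*|_{\mathcal N}$ injective (since $\ker T^*\subseteq[\ker T^*]_T$), extract an eigenvector of $T^*$, derive $T'v=\mu^{-1}v$ from $v\in\operatorname{ran}T$, and then obtain $(T^*T)^{-1}v=v$ by the equality case of the positivity $I-(T^*T)^{-1}\geqslant 0$ --- which is exactly the paper's conclusion $T'^*T'g=g$, since $T'^*T'=(T^*T)^{-1}$. From that point the two arguments coincide. What your route buys is independence from the two external citations; what the paper's route buys is brevity and a conceptual framing in terms of the Cauchy dual's Wold decomposition, which fits the rest of Section 4.
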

\begin{proof}  Assume that $\mathcal H \ominus [\ker T^*]_T$ is finite-dimensional. 
Note that the hyper-range $\mathcal H'_u$  of $T'$ is a closed invariant subspace for $T'$ and let $S:=T'|_{\mathcal H'_u}.$ 
By \cite[Proposition 3.4]{S}, $\mathcal H'_u=\mathcal H \ominus [\ker T^*]_T,$ which is finite-dimensional. It now suffices to check that $\mathcal H'_u=\{0\}$. Assume that $\mathcal H'_u$ is non-zero. Thus $S$ is an invertible finite-dimensional operator on $\mathcal H'_u$, and hence there exists a non-zero $g \in \mathcal H'_u$ such that $Sg=\lambda g$ for some $\lambda \in \mathbb C \setminus \{0\}$. It follows that $T'g=\lambda g.$ 
However, $T'$ is a contraction (since $T$ is an expansion), and hence $|\lambda| \leqslant 1.$ As $T^*T'=I$, we must have $\lambda T^*g = g$.
Since the spectrum of $T$ is contained in the closed unit disc, $|\lambda|=1$. 
 Recall the fact that the fixed points of a contraction and its adjoint are same (see \cite[Proposition 3.1]{SF}). Applying this fact to the contractive operator $\overline{\lambda}T'$ shows that $T'^*g=\overline{\lambda}g$. It follows that  
$T'^* T' g = g$. Now observe that
\beqn
Tg = T'(T'^* T')^{-1} g = T'g = \lambda g.
\eeqn
It is immediate that $g$ belongs to the hyper-range of $T$, which is  $\{0\}$ by the analyticity of $T$. This contradicts the fact that $g$ is non-zero. Hence $\mathcal H'_u$ must be zero.
\end{proof}
\begin{remark}
An examination of Example \ref{construction} shows that one may construct analytic norm-increasing weighted shifts by letting constant weights with value bigger than $1$ for which $\mathcal H \ominus [\ker T^*]_T$ is non-zero and finite dimensional. The above lemma is not applicable in this case, since these operators do not have spectrum contained in the closed unit disc.
\end{remark}

The previous lemma together with Remark \ref{ap-sp} yields the following:
\begin{theorem}
Let $T \in \mathcal B(\mathcal H)$ be an analytic norm-increasing $m$-concave operator and let $[\ker T^*]_T$ be the $T$-invariant closed subspace generated by $\ker T^* ~(see ~\eqref{WS})$. Then either $T$ admits the wandering subspace property or $\mathcal H \ominus [\ker T^*]_T$ is of infinite dimension.
\end{theorem}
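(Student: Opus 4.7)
The plan is to observe that this theorem is essentially a direct corollary of the preceding lemma, once we supply the missing spectral hypothesis via Remark \ref{ap-sp}. First I would recall that by Remark \ref{ap-sp}, any $m$-concave operator has approximate point spectrum contained in the unit circle, and since the topological boundary of the spectrum always lies in the approximate point spectrum, its spectrum is contained in the closed unit disc $\overline{\mathbb D}$. Hence an analytic norm-increasing $m$-concave operator $T$ satisfies all the hypotheses of the preceding lemma.

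Next I would apply the preceding lemma directly to $T$: it delivers the dichotomy that $\mathcal H \ominus [\ker T^*]_T$ is either $\{0\}$ or infinite-dimensional. In the first case, $[\ker T^*]_T = \mathcal H$, which by definition is the wandering subspace property. In the second case, we obtain the infinite-codimension alternative. These two cases together are exactly the statement of the theorem.

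No further obstacle arises, since all the real work has already been carried out in the preceding lemma (whose proof uses that $T'$ is a contraction, the coincidence of fixed points of a contraction and its adjoint applied to $\overline{\lambda}T'$, and the identity $T = T'(T'^*T')^{-1}$ to force any eigenvector of $S = T'|_{\mathcal H'_u}$ into the hyper-range of $T$, which is $\{0\}$ by analyticity). The only subtlety worth emphasizing in the write-up is that the hypothesis of spectrum inside $\overline{\mathbb D}$ in the lemma is not automatic for a general norm-increasing operator, and it is exactly the $m$-concavity assumption that supplies it through Remark \ref{ap-sp}; this is also the reason the remark immediately after Example \ref{construction} points out that the lemma cannot be applied to arbitrary analytic norm-increasing weighted shifts whose weights are bounded below by a number larger than $1$.
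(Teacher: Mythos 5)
Your proposal is correct and follows exactly the paper's own route: the paper states the theorem as an immediate consequence of the preceding lemma combined with Remark \ref{ap-sp}, which supplies the spectral containment in the closed unit disc needed to invoke that lemma. Your additional observation about why $m$-concavity (rather than mere expansivity) is what makes the lemma applicable matches the remark the paper itself makes after the lemma.
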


Combining the last theorem with Lemma \ref{main}(i), we obtain the following:
\begin{corollary}
Let $\mathscr G = (W, F)$ be a one-circuit directed graph associated with the locally finite rooted directed tree $\mathscr T = (V, E)$ with circuit of length $0.$   
Let $S_{\lambdab, \mathscr G}$ be a weighted shift on $\mathscr G$ such that $\inf \lambdab > 0$. If $S_{\lambdab, \mathscr G}$
is an analytic norm-increasing $m$-concave operator, then $S_{\lambdab, \mathscr G}$ admits the wandering subspace property. 
\end{corollary}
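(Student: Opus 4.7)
The plan is to apply the dichotomy theorem just established and then rule out the ``infinite-dimensional'' alternative by a Cauchy dual computation. Since $S_{\lambdab, \mathscr G}$ is analytic, norm-increasing, and $m$-concave, its spectrum lies in the closed unit disc by Remark \ref{ap-sp}, so the preceding theorem says the wandering subspace property holds provided the orthogonal complement $\ell^2(W) \ominus [\ker S^*_{\lambdab, \mathscr G}]_{S_{\lambdab, \mathscr G}}$ is finite-dimensional. It is precisely this finite-dimensionality that Lemma \ref{main}(i) delivers, once it is applied to the right operator.

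The key observation is that the Cauchy dual $S'_{\lambdab, \mathscr G}$ is itself a weighted shift on $\mathscr G$. In the canonical basis $\{e_w\}_{w \in W}$ the operator $S^*_{\lambdab, \mathscr G} S_{\lambdab, \mathscr G}$ is diagonal with entries $\omega_w := \sum_{u \in \mathsf{Chi}_{\mathscr G}(w)} \lambda_u^2$; these are finite by local finiteness of $\mathscr G$, bounded below (because $\mathscr G$ is leafless and $\inf \lambdab > 0$), and bounded above (because $S_{\lambdab, \mathscr G}$ is bounded). A direct calculation then yields $S'_{\lambdab, \mathscr G} = S_{\lambdab', \mathscr G}$ with $\lambda'_v = \lambda_v / \omega_{\mathsf{par}(v)}$, and in particular $\inf \lambdab' > 0$, placing $S'_{\lambdab, \mathscr G}$ within the hypotheses of Lemma \ref{main}.

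Next, since the circuit of $\mathscr G$ has length $l = 0$, the range of the index $k$ in Lemma \ref{main}(i) collapses to the single value $k = 1$, so the hyper-range of $S'_{\lambdab, \mathscr G}$ is spanned by the one vector $h_1$ and has dimension at most one. By the identification $\mathcal H'_u = \mathcal H \ominus [\ker T^*]_T$ furnished by \cite[Proposition 3.4]{S} (and already recalled in the proof of the preceding lemma), applied to $T = S_{\lambdab, \mathscr G}$, this says
$$\dim \bigl(\ell^2(W) \ominus [\ker S^*_{\lambdab, \mathscr G}]_{S_{\lambdab, \mathscr G}}\bigr) \leqslant 1,$$
and the dichotomy of the preceding theorem forces this complement to be $\{0\}$, so the wandering subspace property holds.

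The main — and essentially only — technical ingredient is the verification that the Cauchy dual is a weighted shift on the same graph with weights bounded away from zero; the rest is a one-line appeal to Lemma \ref{main}(i) in the degenerate case $l+1 = 1$ and to the dichotomy theorem. I do not expect any serious obstacle, since every piece has been prepared in advance.
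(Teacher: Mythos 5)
Your proposal is correct and follows essentially the same route as the paper, which proves the corollary by combining the dichotomy theorem with Lemma \ref{main}(i) applied (via \cite[Proposition 3.4]{S}) to the Cauchy dual, itself a weighted shift on $\mathscr G$ with weights bounded below, so that for $l=0$ the complement $\ell^2(W) \ominus [\ker S^*_{\lambdab, \mathscr G}]_{S_{\lambdab, \mathscr G}}$ is at most one-dimensional and hence zero. Your explicit verification that $S'_{\lambdab, \mathscr G} = S_{\lambdab', \mathscr G}$ with $\inf \lambdab' > 0$ merely spells out a step the paper leaves implicit (it is carried out concretely in Example \ref{construction}).
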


As a future direction, it is natural to incorporate more than one circuit in a rooted directed tree. In this case, the associated weighted shifts are no more composition operators on discrete measure spaces (note that the proof of Lemma \ref{power} relies heavily on the fact that these weighted shifts are composition operators). The problem of characterizing analytic weighted shifts within this class, an important step in the above problem, is of course of independent interest.

\medskip \textit{Acknowledgment}. \
The authors convey their sincere thanks to Zenon Jab{\l}o\'nski and Jan Stochel for several useful comments.

{}

\end{document}